\theoremstyle{plain}
\newtheorem{thm}{Theorem}[section]
\newtheorem{lem}[thm]{Lemma}
\newcommand{\Z}{\mathbb{Z}}
\newcommand{\N}{\mathbb{N}}
\newcommand{\C}{\mathcal C}
\DeclareMathOperator{\lmod}{-mod}
\DeclareMathOperator{\lMod}{-Mod}
\DeclareMathOperator{\tr}{tr}
\DeclareMathOperator{\Hom}{Hom}
\DeclareMathOperator{\Ker}{Ker}
\DeclareMathOperator{\Ima}{Im}
\DeclareMathOperator{\End}{End}
\DeclareMathOperator{\Ext}{Ext}
\DeclareMathOperator{\rank}{rank}
\DeclareMathOperator{\dimv}{\underline{\dim}}
\begin{document}
\title{A new approach to simple modules for preprojective algebras}

\author{William Crawley-Boevey}
\address{Fakult\"at f\"ur Mathematik, Universit\"at Bielefeld, 33501 Bielefeld, Germany}
\email{wcrawley@math.uni-bielefeld.de}

\author{Andrew Hubery}
%\address{Fakult\"at f\"ur Mathematik, Universit\"at Bielefeld, 33501 Bielefeld, Germany}
\email{ahubery@math.uni-bielefeld.de}

\subjclass[2010]{Primary 16G20}
%16  (1959-now) Associative rings and algebras [For the commutative case, see 13-XX]
%16G  (1991-now) Representation theory of rings and algebras
%16G20  (1991-now) Representations of quivers and partially ordered sets

\keywords{quiver representation, preprojective algebra, Deligne-Simpson Problem}

\thanks{Both authors have been supported by the Alexander von Humboldt Foundation 
in the framework of an Alexander von Humboldt Professorship 
endowed by the German Federal Ministry of Education and Research.}

\begin{abstract}
The work of the first author on the moment map for representations of quivers
included a classification of the possible dimension vectors of simple modules for deformed preprojective
algebras. That classification was later used to solve an additive analogue of the Deligne-Simpson problem.
The last step in the proof of the classification involved some general position arguments; 
here we give a new approach which avoids such arguments.
\end{abstract}
\maketitle

\section{Introduction}
Let $K$ be an algebraically closed field and let $Q$ be a quiver with vertex set~$I$. Recall that a left module $X$ for the path algebra $K Q$ corresponds to a representation of $Q$, so is determined by vector spaces $X_i$ for each vertex $i\in I$ and linear maps $X_a:X_{t(a)}\to X_{h(a)}$ for each arrow $a\in Q$, where $h(a)$ and $t(a)$ are the head and tail vertices of~$a$. If finite-dimensional, the dimension vector is $\dimv X := (\dim X_i)\in\N^I$.

The \emph{double} of $Q$ is the quiver $\bar Q$ obtained by adjoining an arrow $a^\ast\colon j\to i$ for each arrow $a\colon i\to j$ of $Q$,
and given $\lambda\in K^I$, the \emph{deformed preprojective algebra} $\Pi^\lambda Q$ is the quotient of $K\bar Q$ 
by the ideal generated by the relation
\[ 
c := \sum_{a\in Q}[a,a^\ast] - \sum_i\lambda_i e_i. 
\]
Here $[a,a^\ast]:=aa^\ast-a^\ast a$ and $e_i$ is the idempotent corresponding to vertex $i$. Equivalently we can take the ideal generated by the elements $c_i=e_i c=c e_i$, one for each vertex (see~\cite{CBH}). Thus a left module for $\Pi^\lambda Q$ corresponds to a representation $X$ of $\bar Q$ satisfying $X_{c,i} = 0$ for all vertices $i$, where we define
\[
X_{c,i} := \sum_{\substack{a\in Q \\h(a)=i}} X_a X_{a^*} - \sum_{\substack{a\in Q \\t(a)=i}} X_{a^*} X_a - \lambda_i 1_{X_i}\in\End_K(X_i).
\]
As usual, by considering the sum of the traces of $X_{c,i}$, one sees that if $X$ is a $\Pi^\lambda Q$-module, then $\lambda\cdot\dimv X := \sum_i\lambda_i\dim X_i$ must be zero.

The set of dimension vectors $\Sigma_\lambda$ of the finite dimensional simple $\Pi^\lambda Q$-modules was determined in \cite{CBmm}, 
and described purely combinatorially in terms of $Q$ and $\lambda$. 
This was later used in \cite{CBadsp} to solve an additive analogue of the Deligne-Simpson Problem.
The proof that the dimension vectors of simple modules must lie in (the combinatorially defined set) $\Sigma_\lambda$ reduces to showing that there are no simple modules of a certain dimension vector in three specific cases. The argument in \cite{CBmm}, and in particular the proof of Theorem 9.1 of that paper, used general position arguments. In this paper we offer an alternative approach.

Let $v$ be a vertex of $Q$. By a \emph{nearly representation} of $\Pi^\lambda(Q)$ (with respect to $v$) we mean a representation $X$ of $\bar Q$ with $\lambda\cdot\dimv X=0$ such that $X_{c,i}=0$ for all $i\neq v$, whereas $\rank(X_{c,v})\le 1$.

Let $Q$ be an affine (or extended Dynkin) quiver, $\delta$ the minimal positive imaginary root and $v$ an extending vertex (meaning that $\delta_v=1$, so deleting $v$ gives the corresponding Dynkin quiver). Let $\lambda\in K^I$. Our main theorem is as follows.

\begin{thm}
\label{t:main}
If $\lambda\cdot\delta=0$ and $X$ is a nearly representation of $\Pi^\lambda Q$ with $\dimv X = m \delta$ and $m > 1$, then $X$ is not simple. 
\end{thm}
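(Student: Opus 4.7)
My plan is to convert $X$ into a genuine module over a slightly enlarged deformed preprojective algebra via a framing trick, find a proper subrepresentation there, and pull it back. The main technical challenge is exhibiting the subrepresentation after framing.

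\textbf{Framing.} Because $\lambda\cdot\dimv X=0$ and $X_{c,i}=0$ for $i\neq v$, summing traces yields $\tr X_{c,v}=0$; combined with $\rank X_{c,v}\le 1$, this forces $X_{c,v}^2=0$. Hence we may write $X_{c,v}=-u\otimes f$ for some $u\in X_v$ and $f\in X_v^*$ with $f(u)=0$. Let $\tilde Q$ be obtained from $Q$ by adjoining a vertex $\infty$ and a single arrow $a\colon\infty\to v$, and extend $\lambda$ to $\tilde\lambda$ by $\tilde\lambda_\infty:=0$. Setting $\tilde X_\infty:=K$, $\tilde X_a(1):=u$ and $\tilde X_{a^*}:=f$ produces a genuine $\Pi^{\tilde\lambda}\tilde Q$-module $\tilde X$ of dimension vector $(m\delta,1)$, still satisfying $\tilde\lambda\cdot\dimv\tilde X=0$.

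\textbf{Pullback.} Any proper nonzero $\bar{\tilde Q}$-subrepresentation $\tilde Y\subseteq\tilde X$ has $\tilde Y_\infty\in\{0,K\}$: in the former case $\tilde Y$ restricts to a $\bar Q$-subrep of $X$ with $\tilde Y_v\subseteq\ker f$; in the latter it is a $\bar Q$-subrep of $X$ containing $u$ at $v$. Provided $u$ and $f$ are both nonzero, such a $\tilde Y$ therefore supplies a proper nonzero $\bar Q$-subrepresentation of $X$, and we are done. The remaining possibility, $X_{c,v}=0$, means $X$ is already a genuine $\Pi^\lambda Q$-module of dimension $m\delta$; here one reduces via Morita equivalence at the extending vertex $v$ (where $\delta_v=1$ guarantees $e_v$ captures all simples with nontrivial component at $v$) to the spherical subalgebra $e_v\Pi^\lambda Qe_v$, which under the hypothesis $\lambda\cdot\delta=0$ is a commutative deformation of the corresponding Kleinian singularity, and hence admits only one-dimensional simples.

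\textbf{Principal obstacle.} The crux is therefore the production of a proper $\bar{\tilde Q}$-subrepresentation of $\tilde X$. The difficulty is that $\tilde Q$ is no longer affine, so the extended Dynkin theory is not directly available, and one must avoid the general-position arguments this paper sets out to replace. One promising avenue is the reflection functor at the framing vertex~$\infty$: a direct computation in the Tits form of $\tilde Q$ gives $((m\delta,1),\alpha_\infty)=2-m$, so reflection sends $(m\delta,1)$ to $(m\delta,m-1)$, enabling an induction on $m$. A complementary strategy is to analyse the $\bar{\tilde Q}$-submodule of $\tilde X$ generated by $1\in\tilde X_\infty$ and use the isotropy $(\delta,\delta)=0$ together with $m>1$ to force its total dimension to fall strictly short of $(m\delta,1)$.
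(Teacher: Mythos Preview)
Your framing construction and pullback lemma are correct and indeed match the paper's Lemma~\ref{l:nearlyinfty}; but from that point on the proposal is not a proof. The ``Principal obstacle''---producing a proper nonzero subrepresentation of the framed module $\tilde X$---is exactly the content of the theorem, and neither of your suggested strategies works. Reflection at the framing vertex requires $\tilde\lambda_\infty\neq 0$ to give an equivalence of module categories, whereas you set $\tilde\lambda_\infty=0$; and even formally the reflection sends $(m\delta,1)$ to $(m\delta,m-1)$, which does not decrease $m$ and so sets up no induction. The ``submodule generated by $1\in\tilde X_\infty$'' idea is not developed, and there is no reason its dimension should fall short of $(m\delta,1)$: the isotropy $(\delta,\delta)=0$ gives no such bound for $\bar Q$-submodules of a specific module.

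Your treatment of the degenerate case $X_{c,v}=0$ is also flawed. The spherical subalgebra $e_v\Pi^\lambda Q e_v$ is \emph{not} commutative for general $\lambda$ with $\lambda\cdot\delta=0$; these algebras are the noncommutative deformations of Kleinian singularities of \cite{CBH}, and (apart from the Jordan quiver, where $\lambda\cdot\delta=0$ forces $\lambda=0$) they are genuinely noncommutative. Moreover the passage from $\Pi^\lambda Q$ to $e_v\Pi^\lambda Q e_v$ is not a Morita equivalence. The paper's actual argument avoids the framed quiver after the initial reformulation and instead exploits the representation theory of the affine quiver $Q$ itself: it splits according to whether the underlying $KQ$-module $X$ is regular, uses the block shape of $\Phi_{XX}(\xi)-\lambda 1_X$ together with $\Ext^1$-vanishing between tubes (and between preprojectives and preinjectives) to peel off summands, and reduces the single-tube case via universal localization/perpendicular categories to a strictly smaller affine quiver, with Lemma~\ref{l:loopcase} on almost-commuting matrices as the base of the induction. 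None of this machinery appears in your proposal.
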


The theorem is proved in section~\ref{s:proof}. In section~\ref{s:theotherbit} we explain how it fits with \cite{CBmm}.

\section{Almost commuting matrices}
The case of Theorem~\ref{t:main} when $Q$ is a loop, the ``Jordan quiver'', needs to be treated separately, as the base for an induction. It takes the following form.

\begin{lem} 
\label{l:loopcase}
If $a,b$ are endomorphisms of $K^m$ with $m>1$, whose commutator $c=ab-ba$ has rank at most one, then $a$ and $b$ have a non-trivial proper common invariant subspace $U$. 
\end{lem}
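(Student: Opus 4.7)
The plan is to distinguish the commuting case from the rank-one case, and in the latter to exploit the generalized eigenspace decomposition of $a$ together with the factorization $c = \xi\phi^T$ (with $\phi^T\xi = \tr c = 0$).

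If $c = 0$ then $a$ and $b$ commute; since $K$ is algebraically closed, any eigenspace of $a$ is $b$-invariant and gives the required common invariant subspace, unless $a$ is a scalar, in which case we take an eigenspace of $b$ instead.

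If $c \neq 0$, write $c = \xi\phi^T$ with $\xi, \phi$ nonzero. For $v \in V_\lambda := \ker(a-\lambda)$, the identity $(a-\lambda)bv = cv = (\phi^T v)\xi$ shows that $V_\lambda$ is $b$-invariant precisely when $V_\lambda \subseteq \ker\phi^T$. More generally, decompose $V = \bigoplus_\mu W_\mu$ into generalized eigenspaces of $a$ and write $\xi = \sum\xi_\mu$, $\phi = \sum\phi_\mu$ in block form. Then the $(\mu,\nu)$-block of $c$ equals $\xi_\mu\phi_\nu^T$; for $\mu\neq\nu$ the Sylvester equation $a_\mu b_{\mu\nu} - b_{\mu\nu}a_\nu = \xi_\mu\phi_\nu^T$ (with $a_\mu, a_\nu$ of disjoint spectra) has a unique solution $b_{\mu\nu}$. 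If some $\phi_\mu$ vanishes then $b_{\nu\mu} = 0$ for all $\nu\neq\mu$, so $W_\mu$ is a common invariant subspace; analogously if some $\xi_\mu = 0$. When $a$ has multiple eigenvalues but all $\xi_\mu, \phi_\mu$ are nonzero, each diagonal block $[a_\mu, b_{\mu\mu}] = \xi_\mu\phi_\mu^T$ is again a rank $\leq 1$ commutator with $\phi_\mu^T\xi_\mu = 0$, and I would set up an induction on $m$ using these smaller instances.

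The hard case is when $a$ has a single eigenvalue, so (after shifting by a scalar, which does not affect $c$) $a$ is nilpotent; by the symmetry between $a$ and $b$ we may also take $b$ nilpotent. Here the plan is to exploit the decreasing filtration $V \supseteq \Ima a \supseteq \Ima a^2 \supseteq \cdots$: the identity $ba^j - a^jb = -\sum_{k=0}^{j-1}a^k c a^{j-1-k}$ shows that $\Ima a^j$ is $b$-invariant whenever $\xi \in \Ima a^j$, and dually $\ker a^j$ is $b$-invariant whenever $\phi$ annihilates $\ker a^j$. The main obstacle is the extremal case in which $\xi \notin \Ima a$ and $\phi|_{\ker a} \neq 0$ (and likewise with $a$ replaced by $b$); this has to be handled by leveraging the trace identity $\phi^T\xi = 0$ together with the Jordan block structures, probably by induction on $m$ starting from a direct check at $m = 2$.
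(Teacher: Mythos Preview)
Your proposal is incomplete, and the missing idea sits exactly where you flag ``the main obstacle'': the extremal case $\xi \notin \Ima a$ and $\phi|_{\Ker a} \neq 0$ simply cannot occur. For $v \in \Ker a$ one has $cv = abv - bav = abv \in \Ima a$, so $c(\Ker a) \subseteq \Ima a$. If some $v \in \Ker a$ has $\phi^T v \neq 0$, then $cv = (\phi^T v)\xi$ is a nonzero multiple of $\xi$ lying in $\Ima a$, forcing $\xi \in \Ima a$. Thus either $\xi \in \Ima a$, in which case your $j=1$ filtration step already gives the common invariant subspace $U = \Ima a$, or else $\Ker a \subseteq \Ker \phi^T = \Ker c$, and dually $U = \Ker a$ works. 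This single observation is the entire content of the paper's proof: shift $a$ by one eigenvalue so that $a$ is singular but (we may assume) nonzero, and take $U = \Ima a$ or $U = \Ker a$ according to the dichotomy above. No generalized eigenspace decomposition, no Sylvester equations, and no induction on $m$ are needed; your own filtration argument already contains the answer at $j=1$ once you see that the obstruction is vacuous.

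There is also a second, independent gap in your multi-eigenvalue case. When all $\xi_\mu, \phi_\mu$ are nonzero, induction on a diagonal block yields a subspace of $W_\mu$ invariant under $a_\mu$ and $b_{\mu\mu}$, but the off-diagonal blocks $b_{\nu\mu}$ are in general nonzero (indeed, the Sylvester equation you wrote down has a \emph{unique} solution, not the zero solution, when $\xi_\nu\phi_\mu^T \neq 0$), so this subspace need not be $b$-invariant and the inductive step does not close as written. Fortunately the observation above renders this whole branch of the case analysis unnecessary.
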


This is proved in \cite[Lemma 9.7]{CBmm}, and in fact has a longer history, see \cite{Laffey,Guralnick,CLR}. We offer the following mild reformulation of the proof in \cite{CBmm}. Replacing $a$ by $a-\alpha$ for an eigenvalue $\alpha\in K$, we may assume that $a$ is not invertible. If $a=0$, then we just take an invariant subspace of $b$.  Otherwise, observe that $c(\Ker(a))\subset\Ima(a)$, so either $\Ima(c)\subset\Ima(a)$ and we can take $U=\Ima(a)$, or else $\Ker(a)\subset\Ker(c)$ and we can take $U=\Ker(a)$.

\section{Nearly representations and a quiver $Q_\infty$}

We first note the following fact.

\begin{lem}\label{lem:case-II}
If $X$ is a representation of $\bar Q$ and $\dim X_v\le 1$, then $X$ is a nearly representation of $\Pi^\lambda Q$ if and only if it is an actual $\Pi^\lambda Q$-module.
\end{lem}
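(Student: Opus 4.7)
The \emph{if} direction is immediate: if $X$ is a genuine $\Pi^\lambda Q$-module then $X_{c,i}=0$ for every $i$, so in particular $X_{c,v}$ has rank $0\le 1$, and $\lambda\cdot\dimv X=0$ is the usual trace obstruction already noted in the introduction. So the content is in the converse, and the plan is to deduce $X_{c,v}=0$ from the hypotheses by the same trace argument, now applied as a quantitative tool rather than a mere obstruction.

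First I would recall the key identity: for any representation $X$ of $\bar Q$,
\[
\sum_{i\in I}\tr(X_{c,i}) = -\lambda\cdot\dimv X.
\]
This comes from the fact that each arrow $a\in Q$ contributes $\tr(X_aX_{a^\ast})$ to $\tr(X_{c,h(a)})$ and $-\tr(X_{a^\ast}X_a)$ to $\tr(X_{c,t(a)})$, and these cancel because $\tr(X_aX_{a^\ast})=\tr(X_{a^\ast}X_a)$; the remaining contribution is exactly $-\sum_i\lambda_i\dim X_i$.

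Now assume $X$ is a nearly representation with respect to $v$ with $\dim X_v\le 1$. By hypothesis $\lambda\cdot\dimv X=0$ and $X_{c,i}=0$ for all $i\neq v$, so the identity above collapses to $\tr(X_{c,v})=0$. But $X_{c,v}\in\End_K(X_v)$ and $\dim X_v\le 1$, so $X_{c,v}$ is either the zero endomorphism of the zero space or a scalar endomorphism of a one-dimensional space; in either case it is determined by its trace, hence $X_{c,v}=0$. Thus $X_{c,i}=0$ for all $i$ and $X$ is an actual $\Pi^\lambda Q$-module.

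There is essentially no obstacle here: the lemma is a small observation recording that in the low-dimensional case at $v$, the rank-one slack in the definition of a nearly representation is automatically squeezed out by the global trace identity. The only thing to be careful about is citing the assumption $\lambda\cdot\dimv X=0$, which is built into the definition of nearly representation precisely so that this argument goes through.
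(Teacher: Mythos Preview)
Your proof is correct and takes essentially the same approach as the paper: the paper's argument is precisely that $\lambda\cdot\dimv X=0$ forces $\tr(X_{c,v})=0$, and an endomorphism of a space of dimension at most one with trace zero must vanish. You have simply spelled out the trace identity and the ``if'' direction in more detail than the paper does.
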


\begin{proof}
By definition, if $X$ is a nearly representation, then $\lambda\cdot\dimv X=0$. This ensures that $X_{c,v}$ has trace zero, and since it is an endomorphism of a vector space of dimension $\le 1$, it is zero.
\end{proof}

Let  $Q_\infty$ be the quiver obtained from $Q$ by adjoining a new vertex $\infty$, so its vertex set is $I_\infty = I \cup \{\infty\}$, and a new arrow $a\colon\infty\to v$. We extend $\lambda\in K^I$ by zero to consider it as as an element of $K^{I_\infty}$, that is, we define $\lambda_\infty = 0$. In this section we explain the relationship between nearly representations of $\Pi^\lambda Q$ with respect to $v$, and representations of $\Pi^\lambda Q_\infty$. We write $S(\infty)$ for the simple $\Pi^\lambda Q_\infty$-module which is 1-dimensional at vertex $\infty$ and zero elsewhere. We write $\C$ for the category of $\Pi^\lambda Q_\infty$-modules $X$ with $\Hom(X,S(\infty))=\Hom(S(\infty),X)=0$. There is a forgetful functor from the category of representations of $\bar Q_\infty$, the double of $Q_\infty$, to the category of representations of~$\bar Q$.

\begin{lem}
\label{l:nearlyinfty}
The forgetful functor induces an equivalence from the category of $X\in \C$ with $\dim X_\infty\le 1$ to the category of nearly representations of $\Pi^\lambda Q$. 
\end{lem}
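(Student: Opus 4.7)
The plan is to establish essential surjectivity and full faithfulness separately. The key observation, used throughout, is that passing from $\Pi^\lambda Q$ to $\Pi^\lambda Q_\infty$ changes the preprojective relations only in two places: at $v$, we have $X_{c,v}^{(Q_\infty)} = X_{c,v}^{(Q)} + X_aX_{a^*}$; and at $\infty$, $X_{c,\infty} = -X_{a^*}X_a$ (using $\lambda_\infty=0$). So when $\dim X_\infty \le 1$ the ``extra'' structure carried by $X$ over $Y:=F(X)$ is exactly a rank-at-most-one factorization of $-Y_{c,v}$, together with the relation $X_{a^*}X_a=0$. Morphisms on both sides are morphisms of the underlying quiver representations of $\bar Q_\infty$ or $\bar Q$, so $F$ just forgets the component at $\infty$.

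For essential surjectivity, given a nearly representation $Y$, I first note that $\tr(Y_{c,v})=0$ by the trace argument already mentioned in the introduction: $\sum_i \tr(Y_{c,i}) = -\lambda\cdot\dimv Y = 0$ and all summands with $i\ne v$ vanish. If $Y_{c,v}=0$ I set $X_\infty=0$; otherwise I factor $Y_{c,v}=-up$ with $u\colon K\to Y_v$ injective and $p\colon Y_v\to K$ surjective, which automatically satisfies $pu=0$ by the trace condition, and set $X_\infty=K$, $X_a=u$, $X_{a^*}=p$. This gives $X_{c,v}^{(Q_\infty)}=0$ and $X_{c,\infty}=-pu=0$, so $X$ is a $\Pi^\lambda Q_\infty$-module. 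That $X\in\C$ then reduces to injectivity of $X_a$ (which rules out nonzero maps $S(\infty)\to X$) and surjectivity of $X_{a^*}$ (which rules out maps $X\to S(\infty)$), both of which hold by construction.

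For full faithfulness, given $g\colon F(X)\to F(X')$ I need a unique $f_\infty\colon X_\infty\to X'_\infty$ making $f$ a $\bar Q_\infty$-morphism. The main tool is that $g_v$ automatically intertwines $Y_{c,v}$ and $Y'_{c,v}$, since it commutes with all $Y_b, Y_{b^*}$ for arrows $b$ of $Q$. In the degenerate cases where $X_\infty=0$ or $X'_\infty=0$ the forced choice $f_\infty=0$ works, as the intertwining relation combined with injectivity/surjectivity of the surviving $u,p$ or $u',p'$ forces $g_v u=0$ or $p'g_v=0$. The main obstacle is the case $\dim X_\infty=\dim X'_\infty=1$, where $f_\infty$ is a scalar $\alpha$ and I need both $g_v u=\alpha u'$ and $\alpha p=p'g_v$. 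My plan is to use the intertwining $g_v u p = u' p' g_v$ together with injectivity of $u'$ to show that $p'g_v$ vanishes on $\Ker p$, hence descends uniquely as $\alpha p$ for some $\alpha\in K$; then right-cancelling the surjective $p$ from $g_v u p = \alpha u' p$ yields the remaining equation $g_v u=\alpha u'$, and uniqueness of $\alpha$ is immediate from $p\ne 0$.
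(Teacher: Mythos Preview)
Your argument is correct, and it takes a genuinely different route from the paper's proof. The paper works structurally: it introduces the algebra $A$ obtained from $\Pi^\lambda Q_\infty$ by dropping the relation at $\infty$ and the algebra $B$ obtained from $\Pi^\lambda Q$ by dropping the relation at $v$, identifies $B$ with the corner algebra $fAf$ for $f=1-e_\infty$, and then invokes the recollement
\[
A/(f)\lMod \;\rightleftarrows\; A\lMod \;\rightleftarrows\; B\lMod
\]
and the associated intermediate extension $\gamma$. The equivalence between bistable $A$-modules and $B$-modules is then a general fact about recollements, and one checks that $\gamma(Y)_\infty\cong\Ima Y_{c,v}$, so the condition $\dim X_\infty\le 1$ matches $\rank Y_{c,v}\le 1$; a final trace argument handles the passage from $A$ to $\Pi^\lambda Q_\infty$. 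Your proof is the same equivalence computed by hand: your construction of $X$ from $Y$ (setting $X_\infty=0$ or $K$ and factoring $Y_{c,v}=-up$) is exactly the intermediate extension, and your case analysis for full faithfulness replaces the appeal to abstract recollement properties. What the paper's approach buys is brevity and a clear conceptual explanation (bistability $=$ membership in $\C$, intermediate extension $=$ inverse functor) that would transfer to other situations; what your approach buys is that it is entirely self-contained, requires no external machinery, and makes the role of the trace identity $pu=0$ and of the injectivity/surjectivity of $u,p$ completely transparent. One small point worth making explicit in your write-up: in the full-faithfulness step you use that for any $X\in\C$ with $\dim X_\infty=1$ the maps $X_a$ and $X_{a^*}$ are injective and surjective respectively; you have essentially observed this in the essential-surjectivity paragraph, but it is being applied to the given $X,X'$, not just the ones you constructed.
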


\begin{proof}
Let $A$ be the algebra defined like the deformed preprojective algebra $\Pi^\lambda Q_\infty$ but omitting the relation at vertex $\infty$, and consider the idempotent $f = \sum_{i\in I} e_i = 1-e_\infty\in A$. Clearly $A/(f)\cong K$. Now let $B$ be the algebra defined like $\Pi^\lambda Q$ but omitting the relation at vertex $v$. Then the natural map $B\to fAf$ is an isomorphism. For, it is is surjective since any path in $\overline{Q_\infty}$ with start and end vertices in $I$ only involves $a$ and $a^*$ via the product $aa^*$, and using the relation at $v$ this can be written as a linear combination of paths in $\bar Q$. Now it is an isomorphism since any $B$-module $X$ can be turned into an $A$-module $\ell(X)$ by defining $X_\infty:=X_v$, $X_a := -X_{c,v}$ and $X_{a^*}:= 1_{X_v}$, where $X_{c,v}$ is taken with respect to the quiver $Q$.

We get a recollement
\[ \begin{tikzcd}[column sep=50pt]
A/(f)\lMod \arrow[r, "i" description] &
A\lMod \arrow[r, "f" description] \arrow[l, yshift=8pt, swap, "q"] \arrow[l, yshift=-8pt, "p"] &
B\lMod \arrow[l, yshift=8pt, swap, "\ell"] \arrow[l, yshift=-8pt, "r"]
\end{tikzcd} \]
%\[
%\xymatrix{A/(f)\lMod \ar[rr]|{i} \;  && \; A\lMod \; \ar@<2ex>[ll]^{p} \ar@<-2ex>[ll]_{q} \ar[rr]|{f}  && \; B\lMod \ar@<2ex>[ll]^{r} \ar@<-2ex>[ll]_{\ell}}
%\]
where $f$ denotes the functor $X\leadsto f X$. The left adjoint $\ell$ is as above, and the right adjoint $r$ is similar, with $X_a := 1_{X_v}$ and $X_{a^\ast} := -X_{c,v}$. There is an intermediate extension functor $\gamma:B\lMod\to A\lMod$ where $\gamma(X)$ is the image of the natural map $\ell(X)\to r(X)$, so $\gamma(X)_\infty \cong \Ima X_{c,v}$. An $A$-module is said to be bistable if it has no homomorphism to or from a non-zero $A/(f)$-module, or equivalently in this case, to or from the simple module $S(\infty)$. By standard properties of recollements, see for example \cite[Lemma 2.2]{CBS} and the remark following \cite[Definition 2.5]{CBS}, the functor $f$ induces an equivalence from the category of bistable $A$-modules to $B\lMod$, with inverse $\gamma$. Finally, by a trace argument as in Lemma~\ref{lem:case-II}, a representation of $A$ whose dimension at vertex $\infty$ is at most one is a $\Pi^\lambda Q_\infty$-module if and only if $\lambda\cdot\dimv X=0$.
\end{proof}

There is the following consequence (see for example \cite[Lemma 2.1(6)]{CBS}).

\begin{lem}
The forgetful functor induces a bijection between isomorphism classes of simple $\Pi^\lambda Q_\infty$-modules $X$ with $\dim X_\infty\le 1$ other than $S(\infty)$ and isomorphism classes of simple nearly representations of $\Pi^\lambda Q$.
\end{lem}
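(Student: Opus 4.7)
The plan is to deduce this directly from the equivalence in Lemma~\ref{l:nearlyinfty}, together with the standard description of simple modules in a recollement packaged in \cite[Lemma 2.1(6)]{CBS}. Write $\mathcal{C}_1$ for the full subcategory of $\mathcal{C}$ consisting of $X$ with $\dim X_\infty\le 1$; by Lemma~\ref{l:nearlyinfty} the forgetful functor restricts to an equivalence of $\mathcal{C}_1$ with the category of nearly representations of $\Pi^\lambda Q$. Since an equivalence of categories preserves and reflects simple objects, everything reduces to matching up the simple objects of $\mathcal{C}_1$ with the simple $\Pi^\lambda Q_\infty$-modules $X\not\cong S(\infty)$ satisfying $\dim X_\infty\le 1$.

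One direction is immediate: a simple $\Pi^\lambda Q_\infty$-module $X\not\cong S(\infty)$ automatically has $\Hom(X,S(\infty))=\Hom(S(\infty),X)=0$, since a non-zero morphism between non-isomorphic simples is impossible; so $X\in\mathcal{C}$, hence (with the dimension hypothesis) $X\in\mathcal{C}_1$ and is simple there a fortiori.

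For the converse one takes $X\in\mathcal{C}_1$ which is simple in $\mathcal{C}_1$ and shows it is simple as a $\Pi^\lambda Q_\infty$-module. Given any non-zero submodule $Y\subseteq X$, one has $\Hom(S(\infty),Y)=0$ and $\dim Y_\infty\le 1$ inherited from $X$. If also $\Hom(Y,S(\infty))=0$ then $Y\in\mathcal{C}_1$ and $Y=X$; otherwise one has a short exact sequence $0\to Z\to Y\to S(\infty)\to 0$, and $Z\subsetneq Y$ is a submodule of $X$ with $Z_\infty=0$, which forces $\Hom(Z,S(\infty))=0$ and hence $Z\in\mathcal{C}_1$. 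Simplicity of $X$ in $\mathcal{C}_1$ then leaves only $Z=0$, giving $Y\cong S(\infty)\hookrightarrow X$ and contradicting $X\in\mathcal{C}$.

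The main obstacle is this ``pruning'' step in the converse, ensuring that a submodule which escapes $\mathcal{C}_1$ via an $S(\infty)$ quotient can be replaced by a strictly smaller submodule that does stay in $\mathcal{C}_1$. This is a general recollement phenomenon, and is precisely what is being invoked in the citation to \cite{CBS}, so no further substantive calculation should be required beyond bookkeeping with the categories $\mathcal{C}$ and $\mathcal{C}_1$.
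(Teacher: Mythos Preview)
Your plan to invoke \cite[Lemma 2.1(6)]{CBS} is exactly what the paper does, but the detailed argument you wrap around it has a gap. The phrase ``an equivalence of categories preserves and reflects simple objects'' is only safe for the \emph{categorical} notion of simple inside $\mathcal{C}_1$ and inside the category of nearly representations. What the lemma actually asks for is simplicity in the ambient module categories: simple as a $\Pi^\lambda Q_\infty$-module on one side, and simple as a $K\bar Q$-module on the other. Your pruning argument correctly shows that, for $X\in\mathcal{C}_1$, categorical simplicity in $\mathcal{C}_1$ coincides with simplicity as a $\Pi^\lambda Q_\infty$-module. But you never establish the companion statement on the other side: that a nearly representation is simple as a $K\bar Q$-module if and only if it is categorically simple in the category of nearly representations. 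This does not follow by a symmetric pruning argument, because a $K\bar Q$-submodule $V$ of a nearly representation need not itself be a nearly representation (the condition $\lambda\cdot\dimv V=0$ can fail), and there is no analogue of $S(\infty)$ available to prune against.

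The clean route, and the one the paper intends, is to apply \cite[Lemma 2.1(6)]{CBS} directly to the recollement for $A$-modules set up in the proof of Lemma~\ref{l:nearlyinfty}: it tells you that a bistable $A$-module $X$ is simple as an $A$-module if and only if $fX$ is simple as a $B$-module. Since $\Pi^\lambda Q_\infty$ is a quotient of $A$ and $B$ is a quotient of $K\bar Q$, these are the same as simplicity over $\Pi^\lambda Q_\infty$ and over $K\bar Q$ respectively. The remaining bookkeeping (that $\dim X_\infty\le 1$ together with the relation at $\infty$ matches up with $\rank X_{c,v}\le 1$ and $\lambda\cdot\dimv X=0$) is already contained in Lemma~\ref{l:nearlyinfty}. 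With this direct application your pruning argument becomes unnecessary; conversely, without it your argument is incomplete.
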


\section{Treating representations of $\bar Q$ as pairs $(X,\xi)$}
Given representations $M$ and $N$ of $Q$, or more generally $I$-graded vector spaces, we define
\[
h(N,M) = \bigoplus_{i\in I} \Hom_K(N_i,M_i),
\quad
r(N,M) =  \bigoplus_{a\in Q}\Hom_K(N_{h(a)},M_{t(a)}).
\]
If we have finite direct sum decompositions $N = \bigoplus_\nu N^\nu$ and $M = \bigoplus_\mu M^\mu$, then 
\[
h(N,M) = \bigoplus_{\nu,\mu} h(N^\nu,M^\mu),
\quad
r(N,M) = \bigoplus_{\nu,\mu} r(N^\nu,M^\mu),
\]
so elements of $h(N,M)$ and $r(N,M)$ can be considered as block matrices whose entries are in $h(N^\nu,M^\mu)$ and $r(N^\nu,M^\mu)$.

The following lemma is standard (the case $M=N$ is in \cite[\S 3]{CBmm}).

\begin{lem}
\label{lem:four-term}
Let $Q$ be a quiver, and $M$ and $N$ finite-dimensional $KQ$-modules. Then the map
\[ 
\Phi_{MN} : r(N,M) \to h(N,M), \quad  \theta \mapsto \sum_a(M_a\theta_a-\theta_aN_a), 
\]
has kernel $D\Ext^1_{KQ}(M,N)$ and cokernel $D\Hom_{KQ}(M,N)$, where $D=\Hom_K(-,K)$ is the usual vector space duality. In particular
\[ 
\sum_{i\in I} \tr \big(\Phi_{MN}(\theta)_i f_i\big) = 0 
\]
for all $f\in\Hom_{KQ}(M,N)$ and $\theta\in r(N,M)$.
\end{lem}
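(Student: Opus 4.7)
My plan is to obtain the lemma as the $K$-dual of the standard four-term exact sequence associated to the projective resolution of $M$ over $KQ$.

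\emph{Step 1: Set up the standard four-term sequence.} Since $KQ$ is hereditary, $M$ has a length-two projective resolution of the form
\[
0 \to \bigoplus_{a \in Q} KQ e_{h(a)} \otimes_K M_{t(a)} \to \bigoplus_{i \in I} KQ e_i \otimes_K M_i \to M \to 0.
\]
Applying $\Hom_{KQ}(-,N)$ and using $\Hom_{KQ}(KQ e_i, N) \cong N_i$, this yields the exact sequence
\[
0 \to \Hom_{KQ}(M,N) \to h(M,N) \xrightarrow{\;d\;} \bigoplus_{a \in Q} \Hom_K(M_{t(a)}, N_{h(a)}) \to \Ext^1_{KQ}(M,N) \to 0,
\]
in which $d(f)_a = N_a f_{t(a)} - f_{h(a)} M_a$ measures the failure of $f$ to be a $KQ$-homomorphism.

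\emph{Step 2: Dualize over $K$.} The trace pairings $(f,\xi) \mapsto \sum_i \tr(f_i \xi_i)$ and $(\phi,\theta) \mapsto \sum_a \tr(\phi_a \theta_a)$ are nondegenerate and identify $Dh(M,N)$ with $h(N,M)$ and $D\bigl(\bigoplus_a \Hom_K(M_{t(a)}, N_{h(a)})\bigr)$ with $r(N,M)$. Taking the $K$-dual of the sequence in Step 1 therefore gives an exact sequence
\[
0 \to D\Ext^1_{KQ}(M,N) \to r(N,M) \xrightarrow{\;d^\ast\;} h(N,M) \to D\Hom_{KQ}(M,N) \to 0.
\]

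\emph{Step 3: Identify $d^\ast$ with $\Phi_{MN}$.} A direct calculation using cyclicity of trace and regrouping arrows by their head and tail vertex gives, for $f \in h(M,N)$ and $\theta \in r(N,M)$,
\[
\sum_{a \in Q} \tr\bigl(d(f)_a\, \theta_a\bigr) = \sum_{a \in Q} \tr(f_{t(a)} \theta_a N_a) - \sum_{a \in Q} \tr(f_{h(a)} M_a \theta_a) = -\sum_{i \in I} \tr\bigl(f_i\, \Phi_{MN}(\theta)_i\bigr),
\]
so $d^\ast = -\Phi_{MN}$. A sign does not affect kernel or cokernel, so the identifications from Step 2 give the stated formulas. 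The final trace identity in the lemma is then exactly the statement that the composite $r(N,M) \xrightarrow{\Phi_{MN}} h(N,M) \to D\Hom_{KQ}(M,N)$ is zero, and can in any case be verified independently from $N_a f_{t(a)} = f_{h(a)} M_a$ by the same cyclicity argument.

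The main obstacle is bookkeeping rather than anything conceptual: one has to be careful about the head/tail conventions in writing down the projective resolution, and to track indices and signs correctly through the two trace pairings, which is easy to get wrong if one silently switches between $Q$ and $Q^{\mathrm{op}}$ conventions or between the left and right entries of the pairings.
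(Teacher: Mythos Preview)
Your proposal is correct and follows exactly the approach sketched in the paper: take the standard projective resolution of $M$, apply $\Hom_{KQ}(-,N)$, dualize, and identify the middle map with $\Phi_{MN}$ via the trace pairing. Your write-up simply fills in the details (the explicit form of $d$, the trace computation showing $d^\ast=-\Phi_{MN}$) that the paper leaves implicit.
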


\begin{proof}
We start from the standard resolution of $M$,
\[ 
0 \to \bigoplus_{a\colon i\to j}KQe_i\otimes_KM_j \to \bigoplus_iKQe_i\otimes_KM_i \to M \to 0, 
\]
apply $\Hom_{KQ}(-,N)$, and then dualise. We then use the trace pairing to identify $D\Hom_K(U,V)$ with $\Hom_K(V,U)$ for all vector spaces $U$ and $V$.
\end{proof}

It will be convenient for us to regard representations of $\bar Q$ as pairs $(X,\xi)$ where $X$ is a representation of $Q$ and $\xi\in r(X,X)$ gives the action of the extra arrows $a^\ast\in\bar Q$. Given $\lambda\in K^I$, we write $\lambda 1_X$ for the element of $h(X,X)$ whose $i$th component is $\lambda_i 1_{X_i}$. Clearly the $\Pi^\lambda Q$-modules are the pairs $(X,\xi)$ such that $\Phi_{XX}(\xi)=\lambda 1_X$. Similarly, $(X,\xi)$ is a nearly representation of $\Pi^\lambda Q$ if $\Phi_{XX}(\xi) - \lambda 1_X$ has $v$-th component of rank at most one, all other components are zero, and $\lambda\cdot\dimv X = 0$.

\section{Affine quivers}
Henceforth $Q$ is affine with minimal positive imaginary root $\delta$ and $v$ is an extending vertex. We write $\langle-,-\rangle$ for the Euler form of $Q$ on $\Z^I$.
The reason to consider representations of $\bar Q$ as pairs $(X,\xi)$ is that we can then bring in the representation theory of $Q$. Let us recall it in this case (see \cite[Theorem 3.6(5)]{Rin}). 

If $Q$ is has no oriented cycles, then $K Q$ is finite dimensional, and every indecomposable $KQ$-module $X$ is either preprojective, regular, or preinjective, depending on whether the defect $\langle\delta,\dimv X\rangle$ is negative, zero, or positive. Moreover, the regular modules form a thick abelian subcategory, which further decomposes into a direct sum of uniserial categories, called tubes, and indexed by the projective line. Finally, each tube has finite period (that is, finitely many simple objects), and all but at most three of these tubes are homogeneous, meaning they have period 1, or equivalently a unique simple object.

Since $Q$ is affine, the other possibility is that $Q$ is an oriented cycle. Then every finite dimensional indecomposable module lies in a tube, and at most one tube is not homogeneous (see \cite[Theorem 3.6(6)]{Rin}). In this case we say that all finite dimensional modules are regular.

\begin{lem}
\label{l:homexist}
Suppose $Q$ has no oriented cycles. Let $P$ and $I$ be two $KQ$-modules with $P$ preprojective and $I$ preinjective. Then given any non-zero $x\in P_v$ and $y\in I_v$, there exists $f\in\Hom_{KQ}(P,I)$ with $f(x)=y$.
\end{lem}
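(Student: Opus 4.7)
The plan is to reduce the problem to analyzing a short exact sequence obtained by embedding the projective $P(v)$ into $P$ via $x$, and to deduce surjectivity of the evaluation map from $\Ext^1$-vanishing.

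First I would reduce to the case that $P$ and $I$ are indecomposable. Decomposing $P=\bigoplus P^\alpha$ with $x=\sum x^\alpha$, some $x^{\alpha_0}$ is non-zero, and extending a solution from $P^{\alpha_0}$ by zero to the other summands reduces to that indecomposable summand; decomposing $I=\bigoplus I^\beta$ and $y=\sum y^\beta$, it then suffices to find maps $P\to I^\beta$ hitting $y^\beta$ at $v$ and to sum them. Since $Q$ is acyclic, $P(v)_v = K\cdot e_v$, so $x$ determines a map $\tilde x\colon P(v)\to P$ with $e_v\mapsto x$. The first non-trivial step is to show that $\tilde x$ is injective: its kernel is a submodule of $P(v)$ whose $v$-component vanishes, and for indecomposable preprojective $P$ at an extending vertex $v$ the structure of $P$ rules out any non-zero such kernel.

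With $\tilde x$ injective, set $C=P/\tilde x(P(v))$, and apply $\Hom(-,I)$ to $0\to P(v)\to P\to C\to 0$. Since $P(v)$ is projective, $\Ext^1(P(v),I)=0$, and since $P$ is preprojective and $I$ preinjective, $\Ext^1(P,I)=0$ by Auslander-Reiten duality (as $\tau P$ is preprojective or zero, and preinjective modules admit no maps to preprojectives). The long exact sequence collapses to
\[
\Hom(P,I) \to \Hom(P(v),I) \to \Ext^1(C,I) \to 0,
\]
and under the canonical identification $\Hom(P(v),I)\cong I_v$, $g\mapsto g_v(e_v)$, the first map is exactly $f\mapsto f_v(x)$. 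Hence surjectivity of the evaluation map is equivalent to $\Ext^1(C,I)=0$.

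To establish $\Ext^1(C,I)=0$, I would use the identity $\langle\delta,\dimv P(i)\rangle = -\delta_i$ (from $\delta$ lying in the radical of the symmetrized Euler form), together with the fact that defect is preserved by $\tau$, to compute $\dimv C$ and its defect, and to argue that $C$ has no preinjective indecomposable summand. Any preprojective or regular summand $R$ of $C$ satisfies $\Ext^1(R,I)=0$ by AR duality, since $\tau R$ is preprojective or regular and admits no maps from the preinjective $I$. The main obstacle I expect is this combined structural claim, namely that $\tilde x$ is injective and that $C$ has no preinjective summand; both depend on the precise structure of indecomposable preprojectives for affine acyclic $Q$ at an extending vertex, and I expect this step to be the subtle one requiring the most care.
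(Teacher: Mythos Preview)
Your overall strategy coincides with the paper's: embed $P(v)$ into $P$ via $x$, take the cokernel $C$, apply $\Hom_{KQ}(-,I)$, and deduce surjectivity of evaluation from $\Ext^1_{KQ}(C,I)=0$, which in turn follows once $C$ has no preinjective summand. The reduction to indecomposable $P$ and $I$ is harmless but unnecessary; the paper works directly with arbitrary preprojective $P$ and preinjective $I$.

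Where your sketch falls short is exactly at the two structural claims you flag as subtle, and your proposed arguments for them are not quite on target. For injectivity of $\tilde x\colon P(v)\to P$, you appeal to ``the structure of indecomposable preprojective $P$ at an extending vertex,'' but the kernel lives in $P(v)$, not $P$, and knowing $(\Ker\tilde x)_v=0$ does not by itself force the kernel to vanish. The paper's argument is a clean defect count: the image is a nonzero preprojective module, hence has negative defect; since $P(v)$ has defect $-1$, the kernel has defect $\ge 0$; but a submodule of $P(v)$ is preprojective, and a preprojective module of nonnegative defect is zero. For the absence of preinjective summands in $C$, merely computing the defect of $C$ is insufficient (a module of defect $\le 0$ can still have preinjective summands). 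The paper instead argues by pullback: a preinjective summand of $C$ would pull back along $P\twoheadrightarrow C$ to a submodule $N\le P$ of defect $\ge 0$, contradicting the fact that submodules of preprojectives are preprojective. Once you replace your sketches with these two defect arguments, your proof becomes the paper's.
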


\begin{proof}
Let $P(v) = KQ e_v$, a projective module of defect $-1$. If $M$ is preprojective, then any non-zero homomorphism $g\colon P(v)\to M$ is injective, and the cokernel has no preinjective summand. Namely, the image of $g$ is non-zero preprojective, so has negative defect, hence the kernel of $g$ is preprojective with defect $\geq0$, so is zero. Next, if we have a preinjective summand of the cokernel, then the pull-back along this inclusion yields a submodule $N\leq M$ of defect $\geq0$, a contradiction since $N$ must be preprojective.

Now, given $0\neq x\in P_v$, we have the associated short exact sequence
\[ 
0 \to P(v) \xrightarrow{g} P \to C \to 0, \quad g(v)=x, 
\]
where $C$ has no preinjective summand. Applying $\Hom_{KQ}(-,I)$ and using that $\Ext^1_{KQ}(C,I)=0$ yields an epimorphism
\[ 
\Hom_{KQ}(P,I) \to \Hom_{KQ}(P(v),I)\cong I_v. 
\]
In particular, given $y\in I_v$, there exists some $f\colon P\to I$ such that $f(x)=y$.
\end{proof}

\section{Proof in the non-regular case}
We prove Theorem~\ref{t:main} in the special case of representations $(X,\xi)$ of $\bar Q$, where the representation $X$ of $Q$ is not regular.

\begin{lem}
\label{l:nonreg}
Suppose that $\lambda\cdot\delta=0$. If $(X,\xi)$ is a nearly representation of $\Pi^\lambda Q$ with $\dimv X = m \delta$, $m > 1$ and $X$ is not regular, then $(X,\xi)$ is not simple. 
\end{lem}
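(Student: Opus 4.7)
The plan is to exhibit a proper nonzero sub-$\bar Q$-representation of $(X, \xi)$, which implies $(X, \xi)$ is not simple. Since $\langle\delta, m\delta\rangle = 0$ and $X$ is non-regular, a defect-balancing argument (preprojective summands have negative defect, preinjective positive, regular zero) forces $X$ to have both a nonzero preprojective summand $P$ and a nonzero preinjective summand $I$; grouping indecomposable summands by type, write $X = P \oplus R \oplus I$ as a $KQ$-module. I will produce either $I$ or the submodule $R \oplus I$ as a $\xi$-invariant sub.

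Decompose $\xi$ block-wise with respect to $X = P \oplus R \oplus I$, writing $\xi^{wu} \in r(X^u, X^w)$, and correspondingly for $\Phi_{XX}(\xi)$. The nearly-representation condition says $\Phi_{XX}(\xi)_i - \lambda_i 1_{X_i}$ vanishes for $i \neq v$ and at $v$ equals a rank-$\le 1$ element, which I write as $u w^{\top}$ with $u, w \in X_v$; decomposing $u = (u_P, u_R, u_I)$ and $w = (w_P, w_R, w_I)$, the $(P, I)$-block at $v$ is $u_P w_I^{\top}$. The key technical step is to show $u_P w_I^{\top} = 0$: applying Lemma \ref{lem:four-term} to $\xi^{PI} \in r(I, P)$ yields $\tr(\Phi_{PI}(\xi^{PI})_v f_v) = w_I^{\top} f_v(u_P) = 0$ for every $f \in \Hom_{KQ}(P, I)$, while if $u_P \neq 0$ then Lemma \ref{l:homexist} lets us realise any prescribed element of $I_v$ as $f_v(u_P)$ for some $f$, forcing $w_I = 0$. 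Hence $u_P = 0$ or $w_I = 0$.

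Finally I case-split. If $w_I = 0$, then the $(R, I)$-block $u_R w_I^{\top} = 0$ as well, so $\Phi_{RI}(\xi^{RI}) = 0$; since $\Ext^1_{KQ}(R, I) = D\Hom_{KQ}(I, \tau R) = 0$ (as $\tau R$ is regular and there are no non-zero maps from preinjective to regular), $\Phi_{RI}$ is injective and $\xi^{RI} = 0$. Similarly $\xi^{PI} = 0$ (from $\Ext^1(P, I) = 0$), so $I$ is a $\xi$-invariant sub, proper since $P \neq 0$ and nonzero since $I \neq 0$. If instead $u_P = 0$, then the $(P, R)$-block $u_P w_R^{\top} = 0$, so $\Phi_{PR}(\xi^{PR}) = 0$ and, by the same injectivity argument ($\Ext^1(P, R) = 0$), $\xi^{PR} = 0$. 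Together with $\xi^{PI} = 0$, this makes $R \oplus I$ a $\xi$-invariant sub, again proper and nonzero. The main obstacle is the key technical step: the argument is asymmetric because Lemma \ref{l:homexist} is stated only in the preprojective-to-preinjective direction, and the case analysis above is set up precisely to avoid needing any regular-to-preinjective analogue, which in fact fails in general.
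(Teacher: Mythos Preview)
Your proof is correct and follows essentially the same approach as the paper's: decompose $X=P\oplus R\oplus I$, use Lemma~\ref{l:homexist} together with the trace identity of Lemma~\ref{lem:four-term} to kill the $(P,I)$-block of $\Phi_{XX}(\xi)-\lambda 1_X$ (and hence $\xi_{PI}$ via injectivity of $\Phi_{PI}$), then use the rank-$\le 1$ structure to force one of the $(P,R)$ or $(R,I)$ blocks to vanish and exhibit $R\oplus I$ or $I$ as a proper nonzero $K\bar Q$-submodule. Your explicit $uw^\top$ notation makes the dichotomy $u_P=0$ versus $w_I=0$ a bit more transparent than the paper's phrasing, but the argument is the same.
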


\begin{proof}
Write $X=P\oplus R\oplus I$ with $P$ preprojective, $R$ regular and $I$ preinjective. Since $X$ has zero defect but is not regular, we know that both $P$ and $I$ are non-zero. Now $\xi \in r(X,X)$ so it can be considered as a block matrix as in section 4
\[ 
\xi = \begin{pmatrix}\xi_{PP}&\xi_{PR}&\xi_{PI}\\\xi_{RP}&\xi_{RR}&\xi_{RI}\\\xi_{IP}&\xi_{IR}&\xi_{II}\end{pmatrix}. 
\]
Since the decomposition of $X$ is into $KQ$-module direct summands, for $a\in Q$ the linear maps defining $X$ as a representation take block form
\[
X_a =  \begin{pmatrix} P_a & 0&0 \\ 0& R_a & 0\\ 0&0&I_a\end{pmatrix}. 
\]
It follows that $\Phi_{XX}(\xi) - \lambda 1_X$ takes block form
\[
\Phi_{XX}(\xi) - \lambda 1_X = \begin{pmatrix}\Phi_{PP}(\xi_{PP}) - \lambda 1_P&\Phi_{PR}(\xi_{PR})&\Phi_{PI}(\xi_{PI}) \\ \Phi_{RP}(\xi_{RP}) &\Phi_{RR}(\xi_{RR}) - \lambda 1_R&\Phi_{RI}(\xi_{RI}) \\ \Phi_{IP}(\xi_{IP})&\Phi_{IR}(\xi_{IR})&\Phi_{II}(\xi_{II}) - \lambda 1_I \end{pmatrix}. 
\]

Consider the block $\theta:=\xi_{PI}\in r(I,P)$. Since $\Ext^1_{KQ}(P,I)=0$, the four-term sequence of Lemma~\ref{lem:four-term} takes the form
\[ 
0 \to r(I,P) \xrightarrow{\Phi_{PI}} h(I,P) \to D\Hom_{KQ}(P,I) \to 0. 
\]
In particular $\Phi_{PI}$ is injective. Since $(X,\xi)$ is a nearly representation, it follows that $\Phi_{PI}(\theta)_v$ has rank at most one, whereas all other components are zero. Suppose $\Phi_{PI}(\theta)_v(y)=x\neq0$. By Lemma~\ref{l:homexist} there exists $f\colon P\to I$ with $f(x)=y$, and so
\[ 
1 = \tr (\Phi_{PI}(\theta)_v f_v) = \sum_i \tr (\Phi_{PI}(\theta)_i f_i) = 0, 
\]
a contradiction. Thus $\Phi_{PI}(\theta)=0$, and hence $\theta=0$.

We deduce that $\Phi_{XX}(\xi)-\lambda 1_X$ at position $PI$ is zero. As the $v$ component has rank at most one and all other components are zero, we deduce that one of the blocks $PR$ or $RI$ must also be zero.

Assume $\Phi_{PR}(\xi_{PR})=0$. Then $\Ext^1_{KQ}(P,R)=0$, so $\Phi_{PR}$ is injective and $\xi_{PR}=0$. Thus $R\oplus I$ is a $K\bar Q$-submodule of $(X,\xi)$. The argument when $\Phi_{RI}(\xi_{RI})=0$ is entirely analogous, in which case $I$ is a $K\bar Q$-submodule of $(X,\xi)$.
\end{proof}

\section{Universal localization and perpendicular categories}
\label{s:univloc}
If $\Lambda$ is a hereditary $K$-algebra and $T$ is a finitely presented $\Lambda$-module, then there is a universal localization $\Lambda\to \Lambda_T$ such that the restriction functor $\Lambda_T\lMod\to \Lambda\lMod$ is fully faithful and has essential image the perpendicular category
\[ 
T^\perp := \{ X \in \Lambda\lMod : \Hom_\Lambda(T,X) = 0 = \Ext^1_\Lambda(T,X) \}.
\]
We apply this to $\Lambda = K Q$ where $Q$ is an affine quiver without oriented cycles.

\begin{lem}
\label{l:perpT}
Given a tube $\mathcal T$ tube for $K Q \lmod$, there is a regular module $T$ with $\mathcal T \subseteq T^\perp$ and $KQ_T$ Morita equivalent to $K Q'$, where $Q'$ is an oriented cycle. Moreover the number of vertices of $Q'$ is equal to the period of the tube $\mathcal T$, which is strictly smaller than the number of vertices of $Q$. Also, every finite-dimensional module in $T^\perp$ is regular.
\end{lem}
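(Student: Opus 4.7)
The plan is to build $T$ explicitly as a regular module whose indecomposable summands lie outside $\mathcal{T}$, and then to identify $KQ_T$ by applying Schofield's universal localization theorem in two stages. Enumerate the non-homogeneous tubes of $KQ\lmod$ other than $\mathcal{T}$ as $\mathcal{T}_1,\ldots,\mathcal{T}_s$, with periods $p_1,\ldots,p_s\ge 2$. Inside each $\mathcal{T}_j$ choose a partial tilting module $T_j$ with exactly $p_j-1$ pairwise non-isomorphic indecomposable summands (for instance the successive modules on a single ray of $\mathcal{T}_j$ of quasi-lengths $1,\ldots,p_j-1$, which have trivial endomorphism ring below the period and satisfy $\Ext^1=0$ along the ray). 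Pick also a regular simple $U$ from some homogeneous tube $\mathcal{T}_U$, and set
\[ T = T_1\oplus\cdots\oplus T_s\oplus U. \]
Using the identity $\sum(p-1)=N-2$ over all non-homogeneous tubes of an affine quiver on $N$ vertices without oriented cycles, $T$ has exactly $N-p$ indecomposable summands.

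Since distinct tubes are Hom- and Ext-orthogonal (the latter via $\Ext^1(M,M')\cong D\Hom(M',\tau M)$) and no summand of $T$ lies in $\mathcal{T}$, we obtain $\mathcal{T}\subseteq T^\perp$ immediately. By Schofield's theorem, $T^\perp\simeq KQ_T\lMod$ with $KQ_T$ hereditary and having $p$ isomorphism classes of simples. I identify $KQ_T$ up to Morita equivalence by localizing in two stages. First invert $T':=T_1\oplus\cdots\oplus T_s$: a tube-by-tube analysis shows that inverting $T_j$ collapses $\mathcal{T}_j$ to a rank-one tube (by the Geigle-Lenzing perpendicular-category count inside $\mathcal{T}_j$), so $(KQ)_{T'}$ is Morita equivalent to a path algebra of an affine quiver $Q''$ without oriented cycles in which $\mathcal{T}$ survives as the unique non-homogeneous tube, of period $p$, and $U$ still sits in a homogeneous tube. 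Then invert $U$: because $\tau U\cong U$, Auslander-Reiten duality gives $\Ext^1(U,P)\cong D\Hom(P,U)$, and $\dim\Hom(P,U)=\langle\dimv P,\delta\rangle>0$ for every preprojective $P$ of $Q''$, forcing $P\notin U^\perp$; dually $\Hom(U,I)\ne 0$ for every preinjective $I$. Thus $U^\perp$ contains no finite-dimensional preprojective or preinjective and retains $\mathcal{T}$ and the other homogeneous tubes. The resulting hereditary algebra has $p$ simples, no preprojective or preinjective indecomposables, and a period-$p$ tube; up to Morita equivalence the only such $K$-algebra is $KQ'$ for $Q'$ the cyclic quiver on $p$ vertices.

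The bound $p<N$ is immediate from the classification of tube periods in $\tilde A_n,\tilde D_n,\tilde E_n$ without oriented cycles. Since every finite-dimensional $KQ'$-module is regular by the fact about cyclic quivers recalled in Section 5, the Morita equivalence transports this property to every finite-dimensional module in $T^\perp$. The main obstacle is the first localization stage: verifying that $(KQ)_{T'}$ has precisely the claimed Morita type requires an explicit computation of the $\Ext$-quiver between the simples of the perpendicular category, combining the surviving simples inside each $\mathcal{T}_j$ with the simples at the vertices of $Q$ and the new simples introduced by Schofield's construction, and then matching the resulting data with the classification of affine Dynkin quivers.
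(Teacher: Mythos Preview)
Your two-stage localization strategy is the same as the paper's, and your construction of $T$ as summands from the tubes other than $\mathcal T$ together with one extra homogeneous simple is close to what the paper does. The differences are in execution, and they matter for exactly the point you flag as ``the main obstacle''.

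First, the paper takes $T=U\oplus S$ where $U$ is a direct sum of \emph{regular simples}: all but one from each tube other than $\mathcal T$. You instead take ray modules $E_1[1],\ldots,E_1[p_j-1]$ inside each non-homogeneous $\mathcal T_j$. Both choices are rigid with $p_j-1$ summands per tube, but the paper's choice of regular simples feeds directly into the reference \cite[\S4]{CBrmtha}, which already computes that $KQ_U$ is Morita equivalent to the path algebra of the explicit $\tilde A$ quiver $Q''$ with one long and one short arm. This completely sidesteps your ``main obstacle'': no Ext-quiver computation between surviving simples is needed, because the first localization stage is a citation.

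Second, for the final localization at the homogeneous simple the paper does not argue abstractly. Since $Q''$ is known explicitly, the homogeneous simple $S''$ has a two-term projective resolution $0\to P''(e_n)\xrightarrow{\cdot a_0}P''(e_0)\to S''\to 0$, so the universal localization literally inverts the arrow $a_0$, and the resulting algebra is visibly the path algebra of the oriented cycle $Q'$. Your route---showing that preprojectives and preinjectives are expelled and then asserting that the only hereditary $K$-algebra with $p$ simples and the right tube data is the cyclic path algebra---is recoverable, but to make it rigorous you would still need to identify the simples of the localized algebra with the regular simples of $\mathcal T$ and then compute the Ext-quiver among them; that is essentially the explicit step the paper carries out, just phrased differently.

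Finally, the claim that every finite-dimensional module in $T^\perp$ is regular: the paper gets this in one line from the fact that $T$ already contains \emph{all} the regular simples of some tube, so any preprojective $P$ has $\Ext^1(E,P)\cong D\Hom(P,\tau E)\neq 0$ for some summand $E$ of $T$, and dually for preinjectives. This avoids the detour through $KQ'$ that you take in your last paragraph.
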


\begin{proof}
This is analogous to \cite[Lemma 11.1]{CBconze}. Take $T = U\oplus S$ where $U$ is the direct sum of all but one regular simple in each tube other than $\mathcal T$,
and $S$ is one more regular simple not in $\mathcal T$. In particular $T$ involves all the regular simple modules in some tube, which ensures that $T^\perp$ only contains regular modules. According to \cite[\S4]{CBrmtha}, the universal localization with respect to $U$ is a finite dimensional tame hereditary algebra whose regular modules consist of a tube corresponding to $\mathcal T$ and with all other tubes homogeneous. Thus $KQ_U$ is Morita equivalent to the path algebra of a quiver $Q''$ of shape
\[ 
\begin{tikzcd}[row sep=2pt]
0 \arrow[dr, swap, "a_1"] \arrow[rrrr, bend left=20, "a_0"] &&&& n\\
& 1 \arrow[r, swap, "a_2"] & 2 \arrow[r, swap, "a_3"] & \cdots \arrow[ur, swap, "a_n"]
\end{tikzcd} 
\]
%\[
%{\xymatrix@=14pt{
%0 \ar[r]_{a_1} \ar@/^18pt/[rrrrr]^{a_0} & 1 \ar[r]_{a_2} & 2 \ar[r]_{a_3} & \cdots & \ar[r]_{a_n} & n.
%}}
%\]
Now $KQ_T$ can also be thought of as the universal localization of $KQ_U$ with respect to a regular simple module $S'$ in a homogeneous tube. Now $S'$ corresponds to a representation $S''$ of $Q''$ of the form 
\[ 
\begin{tikzcd}[row sep=2pt]
K \arrow[dr, swap, "1"] \arrow[rrrr, bend left=20, "\zeta"] &&&& K\\
& K \arrow[r, swap, "1"] & K \arrow[r, swap, "1"] & \cdots \arrow[ur, swap, "1"]
\end{tikzcd} 
\]
%\[
%{\xymatrix@=14pt{
%K \ar[r]_1 \ar@/^18pt/[rrrrr]^\zeta & K \ar[r]_1 & K \ar[r]_1 & \cdots & \ar[r]_1 & K.
%}}
%\]
for some $\zeta\in K$. Changing the generators of $KQ''$ we may assume that $\zeta=0$. Then $S''$ has projective resolution
\[ 
\begin{tikzcd}
0 \arrow[r] & P''(e_n) \arrow[r, "\cdot a_0"] & P''(e_0) \arrow[r] & S'' \arrow[r] & 0,
\end{tikzcd} 
\]
so modules for the universal localization $KQ''_{S''}$ correspond to representations of $Q''$ in which the arrow $a_0$ is an isomorphism. It follows that $KQ''_{S''}$, and hence also $KQ_T$, is Morita equivalent to the path algebra of the oriented cycle quiver $Q'$ obtained by shrinking $a_0$ to identify vertices $0$ and $n$.
\end{proof}

Morita equivalence followed by restriction gives an equivalence $\iota: K Q'\lMod \to T^\perp$. Moreover, this induces an equivalence between the category of finite dimensional (respectively nilpotent) $KQ'$-modules and the finite dimensional modules in $T^\perp$ (respectively $\mathcal T$).

The functor $\iota$ also has a left adjoint $t:KQ\lMod\to KQ'\lMod$, given by the tensor product functor $KQ_T \otimes_{KQ} -$ followed by Morita equivalence. Note that this sends projective $KQ$-modules to projective $KQ'$-modules. Moreover, every finitely generated projective $KQ'$-module is isomorphic to a direct sum of standard indecomposable projectives, which are those of the form $P'(j):=KQ'e_j$, indexed by the vertices of $Q'$. We can therefore compute
\[ \dim\Hom_{KQ'}\big(t(P),S(j)\big) = \dim\Hom_{KQ}\big(P,\iota(S(j))\big) = \big\langle\dimv P,\dimv\iota(S(j))\big\rangle. \]

\begin{lem}\label{l:mijprop}
Write $\delta'$ for the minimal imaginary root for $Q'$ (which has all components equal to one).

(a) We have $\lambda'\cdot\delta' = \lambda\cdot\delta$, where $\lambda'_j := \sum_i\lambda_i\big\langle\dimv P(i),\dimv \iota(S(j))\big\rangle$.

(b) $t(P(v))\cong P'(v')$ for some (necessarily extending) vertex $v'$ of $Q'$.

(c) If $Y$ is a $KQ'$-module, then $\dimv Y=m\delta'$ if and only if $\dimv\iota(Y)=m\delta$.
\end{lem}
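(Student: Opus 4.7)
The plan rests on two standard facts: first, since $P(i)=KQe_i$ is projective, $\langle\dimv P(i),\dimv M\rangle=\dim M_i$ for any $KQ$-module $M$; second, the modules $\iota(S(j))$, as $j$ ranges over the vertices of $Q'$, are exactly the regular simples of the tube $\mathcal T$, and hence their dimension vectors are linearly independent positive roots summing to $\delta$.

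For (a), the first fact gives $\lambda'_j=\lambda\cdot\dimv\iota(S(j))$, and summing over $j$, using $\delta'=(1,\dots,1)$,
\[
\lambda'\cdot\delta'=\sum_j\lambda\cdot\dimv\iota(S(j))=\lambda\cdot\sum_j\dimv\iota(S(j))=\lambda\cdot\delta.
\]
For (b), since $t$ preserves projectives, $t(P(v))\cong\bigoplus_jP'(j)^{m_j}$, and by adjunction
\[
m_j=\dim\Hom_{KQ'}(t(P(v)),S(j))=\dim\Hom_{KQ}(P(v),\iota(S(j)))=\dim\iota(S(j))_v.
\]
Since $v$ is extending, $\sum_jm_j=\delta_v=1$, so exactly one $m_j$ equals $1$; this vertex is the required $v'$. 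Every vertex of $Q'$ is then extending because $\delta'$ has all entries equal to one.

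For (c), I would decompose the composition factors of $Y$ into vertex simples $S(j)$ of $KQ'$ with multiplicities $n_j$, and non-nilpotent simples contributing total multiplicity $N$. Each non-nilpotent simple of $KQ'$ has dimension vector $\delta'$ (standard for an oriented cycle) and maps under $\iota$ to a regular simple of dimension vector $\delta$ in a homogeneous tube of $KQ$. Hence the $j$-th component of $\dimv Y$ is $n_j+N$, while $\dimv\iota(Y)=\sum_jn_j\dimv\iota(S(j))+N\delta$. If $\dimv Y=m\delta'$, then $n_j+N=m$ for all $j$ forces all $n_j$ equal to $n:=m-N$, whence $\dimv\iota(Y)=n\delta+N\delta=m\delta$. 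Conversely, if $\dimv\iota(Y)=m\delta$, then $\sum_jn_j\dimv\iota(S(j))=(m-N)\sum_j\dimv\iota(S(j))$, and linear independence of the $\dimv\iota(S(j))$ forces $n_j=m-N$ for each $j$, giving $\dimv Y=m\delta'$.

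The main obstacle is justifying the linear independence of the $\dimv\iota(S(j))$. I would invoke this as a standard property of a non-homogeneous tube in a tame hereditary algebra: the classes of the $p$ regular simples form a $\Z$-basis of the Grothendieck group of the tube, which embeds into $\Z^I$ via the dimension vector. The rest of the argument is routine, using only the adjunction between $t$ and $\iota$, the projective formula $\dim\Hom(P(i),M)=\dim M_i$, and additivity of the dimension vector along composition series.
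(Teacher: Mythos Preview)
Your arguments for (a) and (b) are essentially the paper's: both rest on the identity $\langle\dimv P(i),\dimv M\rangle=\dim M_i$ and on $\sum_j\dimv\iota(S(j))=\delta$.

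For (c) your route diverges. The paper argues via the Euler form: since $\iota$ is fully faithful and exact it preserves $\dim\Hom$ and $\dim\Ext^1$, so $\langle\dimv Y,\dimv Y\rangle_{Q'}=\langle\dimv\iota(Y),\dimv\iota(Y)\rangle_Q$, and a dimension vector is a multiple of the null root precisely when this quantity vanishes; the exact multiple is then read off as $\dim\Hom_{KQ}(P(v),\iota(Y))=\dim\Hom_{KQ'}(P'(v'),Y)$ using (b) and adjunction. Your approach instead works at the level of composition factors and needs two auxiliary facts: that the $\dimv\iota(S(j))$ are linearly independent, and that each non-nilpotent simple $KQ'$-module is sent by $\iota$ to a module of dimension vector $\delta$. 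Both are true. The second can be seen either by the Euler form computation (which is the paper's argument specialised to a simple) or by observing from the explicit choice of $T$ that no regular simple in a non-homogeneous tube other than $\mathcal T$ lies in $T^\perp$, forcing $\iota(S)$ into a homogeneous tube. The linear independence is, as you say, standard; one clean justification in this setting is that $t\iota\cong\mathrm{id}$, and since $t$ sends projectives to projectives and $\iota$ is exact, the induced map $[\iota]$ on Grothendieck groups is split by $[t]$, hence injective.

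The trade-off: the paper's argument is shorter and uses nothing beyond the Euler form and the adjunction already in play, while yours is more hands-on but imports extra structural facts about tubes. Both are valid.
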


\begin{proof}
We use that the dimension vectors of the regular simple modules in any tube sum to the minimal imaginary positive root. Thus
\[ \sum_j\dimv S(j) = \delta' \quad\textrm{and} \quad \sum_j\dimv\iota(S(j)) = \delta. \]

(a) We have $\lambda\cdot\delta = \sum_{i,j}\lambda_i\langle\dimv P(i),\dimv\iota S(j)\rangle = \lambda'\cdot\delta'$.

(b) This follows from $\sum_j \big\langle\dimv P(v),\dimv \iota(S(j))\big\rangle = 1$.

(c) We know that $\dimv Y$ is a multiple of $\delta'$ if and only if
\[ \dim\Hom_{KQ'}(Y,Y) = \dim\Ext^1_{KQ'}(Y,Y), \]
which since $\iota$ is an exact embedding is if and only if $\big\langle\dimv\iota(Y),\dimv\iota(Y)\big\rangle=0$, if and only if $\dimv\iota(Y)$ is a multiple of $\delta$. In this case we can compute the specific multiple of $\delta'$ or $\delta$ as
\[ m = \dim\Hom_{KQ'}\big(P'(v),Y\big) = \dim\Hom_{KQ}\big(P(v),\iota(Y)\big). \qedhere \]
\end{proof}

Now \cite[Corollary 9.6]{CBconze} gives the following:

\begin{lem}
There is an equivalence of categories between 
\begin{enumerate}
\item $\Pi^{\lambda'} Q'$-modules $(Y,\eta)$ (respectively $Y$ nilpotent), and
\item $\Pi^\lambda Q$-modules $(X,\xi)$ with $X\in T^\perp$ (respectively $X\in\mathcal T$).
\end{enumerate}
It sends $(Y,\eta)$ to $(\iota(Y),\xi)$ for some $\xi$.
\end{lem}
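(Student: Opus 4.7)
The plan is to apply \cite[Corollary 9.6]{CBconze} to the hereditary algebra $\Lambda = KQ$ and the module $T$ constructed in Lemma~\ref{l:perpT}. That corollary produces, for any finitely presented module $T$ over a hereditary algebra, an equivalence between the category of $\Pi^\lambda \Lambda$-modules $(X,\xi)$ with $X\in T^\perp$ and the category of $\Pi^{\tilde\lambda}\Lambda_T$-modules $(Y,\eta)$, sending the latter to $(\iota(Y),\xi)$ for an appropriate $\xi$, and describes the new parameter $\tilde\lambda$ in terms of $\lambda$. Composing with the Morita equivalence $KQ_T\sim KQ'$ from Lemma~\ref{l:perpT} translates this into an equivalence between $\Pi^\lambda Q$-modules with $X\in T^\perp$ and $\Pi^{\lambda'} Q'$-modules for some parameter $\lambda'$ on $KQ'$.

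The only nontrivial verification is that the parameter $\lambda'$ so obtained agrees with the one defined in Lemma~\ref{l:mijprop}(a). The recipe of \cite[Corollary 9.6]{CBconze} describes the value of the new parameter at a primitive idempotent of $\Lambda_T$ as the trace of the moment map relation on the corresponding simple, i.e.\ as the trace of $\sum_i \lambda_i 1_{X_i}$ evaluated there. Under the Morita equivalence, the primitive idempotent at vertex $j$ of $Q'$ corresponds to the regular simple $\iota(S(j))$, so the value of the parameter is
\[
\sum_i \lambda_i \dim \iota(S(j))_i.
\]
Since $P(i)=KQe_i$ is projective, $\dim\iota(S(j))_i = \dim\Hom_{KQ}(P(i),\iota(S(j))) = \langle\dimv P(i),\dimv\iota(S(j))\rangle$, and this matches the defining formula $\lambda'_j = \sum_i\lambda_i\langle\dimv P(i),\dimv\iota(S(j))\rangle$ of Lemma~\ref{l:mijprop}(a).

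Finally, the refinement to nilpotent $Y$ corresponding to $X\in\mathcal T$ is exactly the statement following Lemma~\ref{l:perpT} that $\iota$ restricts to an equivalence between nilpotent $KQ'$-modules and finite-dimensional modules in $\mathcal T$; restricting the equivalence just constructed to these subcategories yields the second half of the statement. The main obstacle is the parameter bookkeeping: \cite[Corollary 9.6]{CBconze} is stated abstractly for universal localizations of a hereditary algebra rather than in the Morita-equivalent path-algebra form $KQ'$ used here, and one has to unpack both recipes to see that they agree with the formula defined in Lemma~\ref{l:mijprop}(a).
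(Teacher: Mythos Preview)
Your proposal is correct and matches the paper's approach: the paper simply cites \cite[Corollary 9.6]{CBconze} without further argument, and you have correctly identified this as the key input, together with the Morita equivalence of Lemma~\ref{l:perpT}. Your additional verification that the parameter produced by \cite{CBconze} agrees with the $\lambda'$ of Lemma~\ref{l:mijprop}(a), and your remark on the nilpotent/$\mathcal T$ refinement, are details the paper leaves implicit but which you spell out appropriately.
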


Our main result in this section is the analogue for nearly representations, with respect to the vertices $v$ and $v'$ as above.

\begin{lem}
\label{l:nearlyperp}
There is an equivalence of categories between 
\begin{enumerate}
\item Nearly representations $(Y,\eta)$ of $\Pi^{\lambda'} Q'$ (respectively $Y$ nilpotent), and
\item Nearly representations $(X,\xi)$ of $\Pi^\lambda Q$ with $X\in T^\perp$ (respectively $X\in\mathcal T$).
\end{enumerate}
It sends $(Y,\eta)$ to $(\iota(Y),\xi)$ for some $\xi$.
\end{lem}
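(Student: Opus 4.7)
The strategy is to reduce to the preceding (non-nearly) equivalence via the $Q_\infty$-reformulation of Section~3, which turns a nearly representation with respect to $v$ into an honest $\Pi^\lambda Q_\infty$-module with dimension at most~$1$ at a new vertex~$\infty$. Concretely, I form $Q_\infty$ from $(Q,v)$ and $Q'_\infty$ from $(Q',v')$, extend $\lambda$ and $\lambda'$ by $0$ at $\infty$, and write $\C_\infty\subseteq\Pi^\lambda Q_\infty\lMod$ and $\C'_\infty\subseteq\Pi^{\lambda'}Q'_\infty\lMod$ for the corresponding bistable subcategories. By Lemma~\ref{l:nearlyinfty}, the forgetful functors identify nearly representations on either side with the objects in $\C_\infty$ and $\C'_\infty$ of dimension at most $1$ at $\infty$.

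To transport the preceding equivalence into this enlarged setting, view $T$ as a $KQ_\infty$-module by setting $T_\infty=0$. Since $T$ has zero component at $\infty$, all $\Hom$ and $\Ext^1$ conditions defining $T^\perp$ depend only on the $KQ$-part, so the universal localization of $KQ_\infty$ at $T$ restricts on $KQ$ to $KQ_T$; by Lemma~\ref{l:perpT} this is Morita equivalent to $KQ'$, and Lemma~\ref{l:mijprop}(b) tracks $P(v)$ to $P'(v')$ through that equivalence. Hence $(KQ_\infty)_T$ is Morita equivalent to $KQ'_\infty$, with the new arrow $\infty\to v$ passing to the new arrow $\infty\to v'$. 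Applying the preceding non-nearly equivalence in this enlarged setting --- what it really invokes is the hereditary-algebra result \cite[Corollary~9.6]{CBconze}, which does not require the ambient quiver to be affine --- yields an equivalence between $\Pi^{\lambda'}Q'_\infty\lMod$ and the $\Pi^\lambda Q_\infty$-modules whose restriction to $KQ_\infty$ lies in $T^\perp$. Because this Morita equivalence matches the simple modules at $\infty$ on either side and preserves dimension at $\infty$, it restricts to the subcategories cut out by bistability and by $\dim X_\infty\le 1$; invoking Lemma~\ref{l:nearlyinfty} on both sides then delivers the claimed equivalence of nearly representations, while the nilpotent/tube refinement follows from the identification of nilpotent $KQ'$-modules with the modules in the tube~$\mathcal T$ noted after Lemma~\ref{l:perpT}.

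The main obstacle is establishing that $(KQ_\infty)_T$ is Morita equivalent to $KQ'_\infty$ with the new vertex~$\infty$ attached at the correct vertex~$v'$; this reduces to tracking $P(v)\mapsto P'(v')$ through the universal localization (Lemma~\ref{l:mijprop}(b)), after which the remaining work is the routine bookkeeping of bistability and dimensions through the resulting equivalence.
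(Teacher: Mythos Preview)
Your proposal is correct and follows essentially the same route as the paper: both pass to $Q_\infty$ via Lemma~\ref{l:nearlyinfty}, identify the universal localization $(KQ_\infty)_T$ with (something Morita equivalent to) $KQ'_\infty$ by tracking $P(v)\mapsto P'(v')$, lift to deformed preprojective algebras via the results of \cite{CBconze}, and then restrict using bistability and $\dim_\infty\le1$. The only differences are expository: the paper makes the one-point extension structure $KQ_\infty\cong\left(\begin{smallmatrix}KQ&P(v)\\0&K\end{smallmatrix}\right)$ explicit and cites \cite[Lemma~9.6]{CBmm} and \cite[Theorems~0.2,~0.4,~0.7]{CBconze} directly, whereas you localize at $T$ viewed as a $KQ_\infty$-module and invoke \cite[Corollary~9.6]{CBconze}.
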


\begin{proof}
The path algebra $K Q_\infty$ can be considered as a one-point extension of $K Q$,
\[
K Q_\infty \cong \begin{pmatrix}KQ & P(v) \\ 0 & K \end{pmatrix}.
\]
By \cite[Lemma 9.6]{CBmm} we obtain a pseudoflat epimorphism (in fact a universal localisation) $K Q_\infty \to B$ where
\[
B = \begin{pmatrix}KQ_T & KQ_T\otimes_{KQ} P(v) \\ 0 & K \end{pmatrix},
\]
and using that $t(P(v))\cong P'(v')$ we see that $B$ is Morita equivalent to $KQ'_\infty$. We obtain a functor $\iota\colon KQ'_\infty\lMod\to KQ_\infty\lMod$ lifting $\iota\colon KQ'\lMod\to KQ\lMod$ and sending $S(\infty)$ to $S(\infty)$.

By \cite[Theorems 0.2, 0.4, 0.7]{CBconze} we can lift this to a map $\Pi^\lambda Q_\infty\to\Pi^\mu(B)$, with $\Pi^\mu(B)$ Morita equivalent to $\Pi^{\lambda'}Q'_\infty$, and such that restriction of scalars gives an equivalence between
\begin{enumerate}
\item $\Pi^{\lambda'} Q'_\infty$-modules, and
\item $\Pi^\lambda Q_\infty$-modules $(X,\xi)$ such that the restriction of $X$ to $Q$ is in $T^\perp$.
\end{enumerate}
This sends $(Y,\eta)$ to $(\iota(Y),\xi)$ for some $\xi$. Now use Lemma~\ref{l:nearlyinfty}, noting that $\dim Y_\infty=\dim \iota(Y)_\infty$.
\end{proof}

\section{Proof of the main theorem}
\label{s:proof}
Let $Q$ be an affine quiver, $\delta$ the minimal positive imaginary root and $v$ an extending vertex. In this section we prove the following result. Combined with Lemma~\ref{l:nonreg} it gives Theorem~\ref{t:main}.

\begin{thm}\label{thm:main}
Suppose that $\lambda\cdot\delta=0$. If $(X,\xi)$ is a nearly representation of $\Pi^\lambda Q$ with $\dimv X = m\delta$, $m>1$ and $X$ is regular, then $(X,\xi)$ has a proper non-trivial submodule $(X',\xi')$ with $X'$ regular.
\end{thm}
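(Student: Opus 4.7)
The plan is to induct on the number of vertices of $Q$. For the base case $|I|=1$ ($Q$ the Jordan quiver), the conditions $\lambda\cdot\delta = m\lambda = 0$ with $m>1$ force $\lambda = 0$, so the nearly representation condition reduces to $\rank[X_a, X_{a^*}]\le 1$, and Lemma~\ref{l:loopcase} supplies a proper non-trivial common invariant subspace, which is the required regular subrepresentation.

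For the inductive step ($|I|\ge 2$), decompose $X = \bigoplus_\mathcal{T} X_\mathcal{T}$ according to the tube decomposition of $KQ\lmod$. If $X$ is supported in a single tube $\mathcal{T}$, I would apply Lemma~\ref{l:perpT} to obtain $T$ with $X \in T^\perp$, and then use Lemma~\ref{l:nearlyperp} to translate $(X, \xi)$ into a nearly representation $(Y, \eta)$ of $\Pi^{\lambda'} Q'$ where $Q'$ is an oriented cycle with strictly fewer vertices than $Q$. By Lemma~\ref{l:mijprop}(c), $\dimv Y = m\delta'$ with the same $m > 1$, and $Y$ is nilpotent because $X \in \mathcal{T}$, so the induction hypothesis applied to $(Y, \eta)$ yields a proper non-trivial subrepresentation, which pulls back under the equivalence of Lemma~\ref{l:nearlyperp} to the required regular subrepresentation of $(X, \xi)$.

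If $X$ is supported in more than one tube, write the rank-$\le 1$ matrix $\Phi_{XX}(\xi)_v - \lambda_v 1_{X_v}$ as $xy^T$ and decompose $x = (x_\mathcal{T})$, $y = (y_\mathcal{T})$ along the tube decomposition of $X_v$. For distinct tubes one has $\Hom_{KQ}(X_{\mathcal{T}_1}, X_{\mathcal{T}_2}) = \Ext^1_{KQ}(X_{\mathcal{T}_1}, X_{\mathcal{T}_2}) = 0$, so by Lemma~\ref{lem:four-term} the map $\Phi_{X_{\mathcal{T}_1} X_{\mathcal{T}_2}}$ is an isomorphism, whence the off-diagonal block $\xi_{\mathcal{T}_1 \mathcal{T}_2}$ vanishes iff $x_{\mathcal{T}_1} y_{\mathcal{T}_2}^T = 0$. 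Consequently $X' = \bigoplus_{\mathcal{T} \in S} X_\mathcal{T}$ is a $K\bar Q$-subrepresentation of $(X, \xi)$ iff $\mathrm{supp}(x) \subseteq S$ or $\mathrm{supp}(y) \cap S = \emptyset$, and choosing $S$ suitably produces a proper non-trivial regular subrepresentation whenever $\mathrm{supp}(x)$ or $\mathrm{supp}(y)$ is a proper subset of the tube support of $X$.

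I expect two main obstacles. First, Lemma~\ref{l:perpT} assumes $Q$ has no oriented cycles, so when $Q$ is itself an oriented cycle the single-tube reduction needs a separate direct argument---for instance, treating the compositions around the cycle as a pair of endomorphisms of $X_v$, verifying via the relations that their commutator has rank at most one, and applying Lemma~\ref{l:loopcase}. Second, in the multi-tube case the subcase $\mathrm{supp}(x) = \mathrm{supp}(y) = \mathrm{supp}(X)$ admits no $K\bar Q$-subrepresentation of the form $\bigoplus_{\mathcal{T} \in S} X_\mathcal{T}$, and handling it will require finer information, such as the trace identities $y_\mathcal{T}^T x_\mathcal{T} = -\lambda \cdot \dimv X_\mathcal{T}$ that follow from Lemma~\ref{lem:four-term} applied with $f = 1_{X_\mathcal{T}}$ inside each tube, to produce a subrepresentation that does not respect the tube decomposition.
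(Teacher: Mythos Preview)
Your outline correctly identifies the single-tube reduction via Lemmas~\ref{l:perpT} and~\ref{l:nearlyperp} and the base case via Lemma~\ref{l:loopcase}, but the two obstacles you flag are genuine and neither of your proposed resolutions works as stated.

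For the multi-tube case with $\mathrm{supp}(x)=\mathrm{supp}(y)$ equal to the full tube support, the trace identity $y_{\mathcal T}^T x_{\mathcal T}=0$ only says that each diagonal block $x_{\mathcal T}y_{\mathcal T}^T$ is trace-zero; this is perfectly compatible with all $x_{\mathcal T},y_{\mathcal T}$ being nonzero (take $x_{\mathcal T}=(1,0)^T$, $y_{\mathcal T}=(0,1)^T$ when $\dim X_{\mathcal T,v}=2$), so no tube-respecting submodule need exist, and the identity gives no purchase on submodules that mix tubes. The missing idea is to induct on $m$ as well as on $|I|$. Split $X=U\oplus V$ with $V$ supported in a single tube, so $\dimv V=m'\delta$ with $0<m'<m$, and observe that $(V,\xi_{VV})$ is again a nearly representation. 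If $m'=1$ then $(V,\xi_{VV})$ is an actual $\Pi^\lambda Q$-module by Lemma~\ref{lem:case-II}, so the $VV$ block of $\Phi_{XX}(\xi)-\lambda 1_X$ vanishes and the rank-one condition forces one of the blocks $UV$ or $VU$ to vanish, giving $V$ or $U$ as a submodule. If $m'>1$, induction on $m$ gives a proper nonzero regular $K\bar Q$-submodule $W\subsetneq V$; writing $X=U\oplus W\oplus Z$ with $Z\cong V/W$, the $ZW$ block of $\xi$ is already zero, so the rank-one condition forces one of the blocks $UW$ or $ZU$ of $\Phi_{XX}(\xi)-\lambda 1_X$ to vanish, and since $\Ext^1_{KQ}(U,W)=\Ext^1_{KQ}(Z,U)=0$ this yields $W$ or $U\oplus W$ as a submodule of $(X,\xi)$.

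For the oriented cycle with $|I|\ge 2$, your suggested direct argument is doubly problematic: the nearly-representation relation at $v$ involves only the two arrows incident to $v$, and there is no evident reason the commutator of the full cycle-compositions on $X_v$ should have rank $\le 1$; and even granting that, a common invariant subspace of $X_v$ under those two endomorphisms does not obviously extend to a $K\bar Q$-subrepresentation, which requires invariance under each individual arrow. The paper instead reorients $Q$: the deformed preprojective algebra is independent of orientation by \cite[Lemma~2.2]{CBH}, and via Lemma~\ref{l:nearlyinfty} the same holds for nearly representations, so one passes to an acyclic orientation $Q'$ and applies Lemma~\ref{l:nonreg} or one of the already-established cases to the resulting $(Y,\eta)$. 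Any submodule found pulls back to a submodule of $(X,\xi)$, which is automatically regular since every finite-dimensional module over an oriented cycle is regular.
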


\begin{proof}
We proceed by induction on $m$ and the number of vertices in $Q$.

\medskip
\noindent
\textit{Case 1. (Multiple tubes).}
Suppose there is a non-trivial decomposition $X=U\oplus V$ with the indecomposable summands of $U$ and $V$ lying in disjoint sets of tubes. Note that $\langle\dimv V,\dimv V\rangle = \langle m\delta-\dimv U,\dimv V\rangle = 0$ since $\Hom(U,V) = \Ext^1(U,V)=0$, so $\dimv =m'\delta$ for some $m'$. Writing $\xi$ in block form
\[
\xi = \begin{pmatrix}\xi_{UU}&\xi_{UV}\\\xi_{VU}&\xi_{VV}\end{pmatrix}
\]
according to the decomposition and setting $\theta:=\xi_{VV}$, we see that $(V,\theta)$ is a nearly representation of $\Pi^\lambda Q$.

Suppose first that $m'=1$. Then $(V,\theta)$ is necessarily an actual $\Pi^\lambda Q$-module by Lemma~\ref{lem:case-II}. It follows that $\Phi_{XX}(\xi_{XX})-\lambda 1_X$ is zero in position $VV$. As the $v$ component has rank at most one, and all other components are zero, it must also have a zero in either position $UV$ or position $VU$. Assume position $UV$ is zero. Then $\Ext^1_{KQ}(U,V)=0$, so $\Phi_{UV}$ is injective and $\xi_{UV}=0$. Thus $(V,\theta)$ is a $K\bar Q$-submodule of $(X,\xi)$. The argument when position $VU$ is zero is entirely analogous, in which case $U$ is a $K\bar Q$-submodule of $(X,\xi)$.

Suppose instead that $m'>1$. Then by induction, since $m'<m$, we know that the nearly representation $(V,\theta)$ has a proper non-trivial submodule $(W,\theta')$ such that $W$ is again regular. Let $Z$ be an $I$-graded vector space complement to $W$ in $V$. Then there are corresponding block decompositions of $X$ and $\xi$
\[
X_a = \begin{pmatrix} U_a&0&0\\0&W_a&\ast\\0&0&Z_a\end{pmatrix} \quad\textrm{and}\quad
\xi = \begin{pmatrix}\xi_{UU}&\xi_{UW}&\xi_{UZ} \\ \xi_{WU}&\xi_{WW}&\xi_{WZ} \\ \xi_{ZU}&0&\xi_{ZZ}\end{pmatrix}.
\]
Note that $\xi_{WW}=\theta'$, that $\xi_{ZW}=0$ since $W$ is a $K\bar Q$ submodule of $(V,\theta)$, and that the matrices $Z_a$ equip $Z$ with its natural structure as a $Q$-representation isomorphic to $V/W$. Using the formula for $\Phi_{XX}$ we have the block decomposition
\[
\Phi_{XX}(\xi) - \lambda 1_X = \begin{pmatrix} * & \Phi_{UW}(\xi_{UW})& *  \\ * & * & * \\ \Phi_{ZU}(\xi_{ZU}) & 0 & *  \end{pmatrix}. 
\]
As the $v$ component of $\Phi_{XX}(\xi)-\lambda 1_X$ has rank at most one and the other components are zero, we deduce that one of the blocks $UW$ or $ZU$ must also be zero.

Assume $\Phi_{UW}(\xi_{UW})=0$. Since $W$ is regular and a submodule of $V$, we have $\Ext^1_{KQ}(U,W)=0$. Thus $\Phi_{UW}$ is injective, so $\xi_{UW}=0$, and hence $W$ yields a $K\bar Q$-submodule of $(X,\xi)$.

Assume instead that $\Phi_{ZU}(\xi_{ZU})=0$. Then $\Ext^1_{KQ}(Z,U)=0$, so $\xi_{ZU}=0$, and hence $U\oplus W$ yields a $K\bar Q$-submodule of $(X,\xi)$.

\medskip
\noindent
\textit{Case 2 (Single tube, $Q$ without oriented cycles).}
Suppose that the indecomposable summands of $X$ lie in a single tube $\mathcal T$, and that $Q$ has no oriented cycles. Choose a $KQ$-module $T$ as in Lemma~\ref{l:perpT}. By Lemma~\ref{l:nearlyperp}, $(X,\xi)$ corresponds to a nearly representation $(Y,\eta)$ of $\Pi^{\lambda'} Q'$. Now the number of vertices of $Q'$ is strictly smaller than that of $Q$, and $\dimv Y = m\delta'$ and $\lambda'\cdot\delta'=0$ by Lemma~\ref{l:mijprop}.  Thus by induction $(Y,\eta)$ has a proper non-trivial subrepresentation. Since the functor $\iota$ of \S\ref{s:univloc} is exact, this gives a proper non-trivial subrepresentation $(X',\xi')$ of $(X,\xi)$. Moreover $X'\in T^\perp$, so $X'$ is regular.

\medskip
\noindent
\textit{Case 3 ($Q$ an oriented cycle).}
Suppose that $Q$ is an oriented cycle. If it has only one vertex, then there is a submodule by Lemma~\ref{l:loopcase}, so we may suppose that $Q$ has at least two vertices. The deformed preprojective algebra is independent of the orientation of the quiver \cite[Lemma 2.2]{CBH}, and by Lemma~\ref{l:nearlyinfty} the analogous result holds for nearly representations. Thus we can choose an orientation $Q'$ of $Q$ without oriented cycles. We now have a nearly $\Pi^\lambda Q'$-module $(Y,\eta)$ of dimension vector $m\delta$. Using Lemma~\ref{l:nonreg} or one of the cases above, we see that $(Y,\eta)$ is not simple. Thus $(X,\xi)$ is not simple.
\end{proof}

\section{Connection with previous work}
\label{s:theotherbit}
In \cite[Theorem 1.2]{CBmm} it is shown that the possible dimension vectors of simple modules for a deformed preprojective algebra $\Pi^\lambda Q$ are exactly the elements of a certain combinatorially defined set $\Sigma_\lambda$. A key part of the proof involves showing that there is no simple module in three cases  (I), (II) and (III) which arise in \cite[Theorem 8.1]{CBmm}. These three cases are dealt with on page 288 of \cite{CBmm}. Our Theorem~\ref{t:main} handles case (III), thereby avoiding the use of \cite[Theorem 9.1]{CBmm}. Case (II) is elementary, and is covered by our Lemma~\ref{lem:case-II}. Concerning case (I), our result handles the situation when $\lambda\cdot\delta=0$.

For completeness we offer the following proof of the remaining situation of case (I). Suppose $(X,\xi)$ is a $\Pi^\lambda Q$-module of dimension vector $mp\delta$, where $m\geq 2$, the field $K$ has characteristic $p>0$ and $\lambda\cdot\delta\neq 0$. We show that $X$ is not simple. Suppose first that $X=U\oplus V$ with $\Ext^1_{KQ}(U,V)=0$. Writing $\xi$ as a block matrix, we deduce that $\Phi_{UV}(\xi_{UV})=0$, and hence that $\xi_{UV}=0$. Thus $V$ yields a submodule of $(X,\xi)$ so $(X,\xi)$ is not simple. We reduce to the case when the indecomposable summands of $X$ all lie in a single tube $\mathcal T$. We can then apply the reduction process as above to reduce to the case when $Q$ is a loop. In this case the deformed preprojective algebra is $K\langle x,y\rangle/(xy-yx-\lambda)$ with $\lambda\neq0$. Rescaling $x$ or $y$, we may assume that $\lambda=1$, and hence we are dealing with the first Weyl algebra $A_1$.

The assertion now follows from the well-known fact that every simple $A_1$-module has dimension $p$, cf.~\cite{Revoy}. We sketch a proof. The centre of $A_1$ is $Z:=K[x^p,y^p]$, so given a simple $A_1$-module $M$, both $x^p$ and $y^p$ must act as scalars and then $M$ becomes a module for $A_1\otimes_ZK$. This latter algebra is central simple, so isomorphic to $\mathbb M_p(K)$ and we are done.

\end{document}